\numberwithin{equation}{section}
\newtheorem{theorem}{Theorem}[section]
\newtheorem{lemma}[theorem]{Lemma}
\newtheorem{proposition}[theorem]{Proposition}
\newtheorem{conjecture}[theorem]{Conjecture}
\newcommand{\R}{\mathbb{R}}
\newcommand{\N}{\mathcal{N}}
\begin{document}

\title[]{On decoupling and restriction estimates}

%\author{Shengwen Gan} \address{Shengwen Gan\\  Deparment of Mathematics, Massachusetts Institute of Technology, USA}\email{shengwen@mit.edu}

\author{Changkeun Oh}\address{ Changkeun Oh\\Department of Mathematical Sciences and RIM, Seoul National University, Republic of Korea} \email{changkeun@snu.ac.kr}

%\author{Shukun Wu} \address{ Shukun Wu\\  Department of Mathematics\\ California Institute of Technology, USA}\email{skwu@caltech.edu}

\begin{abstract}
In this short note, we prove that the restriction conjecture for the (hyperbolic) paraboloid in $\R^{d}$ implies the $l^p$-decoupling theorem for the (hyperbolic) paraboloid in $\R^{2d-1}$. In particular, this gives a simple proof of the $l^p$ decoupling theorem for the (hyperbolic) paraboloid in $\R^3$.
\end{abstract}

\maketitle

%\tableofcontents

%\tableofcontents

\section{Introduction}

Let $d \geq 2$ and $\vec{v}=(v_1,\ldots,v_{d-1}) \in \{-1,1 \}^{d-1}$. Consider
\begin{equation}
    \mathcal{H}_{\vec{v}}^{d-1}:=\big\{ (\xi_1,\dots,\xi_{d-1},\sum_{i=1}^{d-1}v_i \xi_i^2 ) : \xi_i \in [0,1]^{d-1}  \big\}.
\end{equation}
Define the extension operator for $\mathcal{H}_{\vec{v}}^{d-1}$ by
\begin{equation}
    \widehat{f d\sigma}(x):=\int_{[0,1]^{d-1}}f(\xi)e\big(( x \cdot (\xi_1,\dots,\xi_{d-1},\sum_{i=1}^{d-1}v_i \xi_i^2 ) \big)\, d\xi,
\end{equation}
where $e(t)=e^{2\pi i t}$.
Let us state the well known conjecture in restriction theory.
\begin{conjecture}[Restriction conjecture, \cite{MR545235}]\label{0224.thm11} For $p>2+\frac{2}{d-1}$, we have
\begin{equation}
    \|\widehat{f d\sigma}\|_{L^p(\R^d)} \leq C_p\|f\|_{L^p([0,1]^{d-1})}.
\end{equation}
\end{conjecture}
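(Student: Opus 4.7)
The restriction conjecture is a famous open problem for $d \geq 3$ (only the case $d = 2$ is fully resolved, via Fefferman and Zygmund), so any proof proposal here is necessarily aspirational; I will describe the framework that has driven the best known partial progress, namely polynomial partitioning combined with multilinear restriction.

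The first step is a reduction from the linear estimate to a $k$-linear one via the Bourgain--Guth broad/narrow decomposition. Partition $[0,1]^{d-1}$ into caps at scale $K^{-1}$ and split $\widehat{f\,d\sigma} = \sum_\tau \widehat{f_\tau\,d\sigma}$. At each point $x \in B_R$, one of two things happens: either $|\widehat{f\,d\sigma}(x)|$ is comparable to the contribution from $k$ caps in transverse directions (the \emph{broad} case), or the mass concentrates in fewer than $k$ caps that cluster along a $(k-1)$-dimensional affine slab (the \emph{narrow} case). The broad case is handled by the Bennett--Carbery--Tao multilinear restriction theorem with an arbitrarily small $R^{\epsilon}$ loss, and the narrow case is closed by parabolic rescaling together with induction on scales.

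To push the resulting exponent all the way down to the conjectured $p > 2 + \frac{2}{d-1}$, I would invoke Guth's polynomial partitioning: find a polynomial $P$ of degree $\lesssim \epsilon^{-O(1)}$ whose zero set subdivides $B_R$ into cells equidistributing the $L^p$-mass of $\widehat{f\,d\sigma}$, and iterate the broad/narrow step on each cell. In the generic \emph{cellular} case, transversality between the associated wave packets yields enough gain to close the induction. In the \emph{algebraic} case, where most wave packets concentrate inside the $R^{1/2}$-neighborhood of the variety $\{P = 0\}$, one invokes polynomial Wolff-type axioms to bound the incidences of $R^{1/2}$-tubes near low-degree algebraic surfaces.

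The main obstacle, and precisely the reason the conjecture remains open, is the algebraic case. Iterated concentration near varieties of each intermediate dimension produces cumulative losses that are not fully absorbed by the currently available incidence bounds, which prevents reaching the sharp exponent $p > 2 + \frac{2}{d-1}$ and leaves a gap that grows with $d$. A successful completion of this plan would require a sharp tube-incidence estimate near algebraic varieties of every codimension, matching the transverse regime up to acceptable constants; short of such a breakthrough, the best one can realistically extract is the sharp restriction estimate for suitable bilinear or multilinear variants rather than the full linear conjecture.
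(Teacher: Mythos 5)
You have not given a proof, and none exists in the paper either: the statement you were asked about is Conjecture~\ref{0224.thm11}, which the paper explicitly treats as an open hypothesis (resolved only for $d=2$ by Fefferman) and never proves. The paper's actual contribution, Theorem~\ref{0224.thm13}, is a conditional implication --- assuming the restriction conjecture in $\R^d$, deduce the $l^p$ decoupling theorem in $\R^{2d-1}$ --- so comparing your text against ``the paper's proof'' of the conjecture is comparing against nothing.

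What you wrote is a survey of the known partial-progress framework (Bourgain--Guth broad/narrow reduction, Bennett--Carbery--Tao multilinear restriction, Guth's polynomial partitioning), and you candidly identify where it breaks: in the algebraic case, when wave packets concentrate in the $R^{1/2}$-neighborhood of a low-degree variety, the available tube-incidence bounds near varieties of intermediate dimension are not sharp enough to reach the endpoint $p>2+\frac{2}{d-1}$, and the losses accumulate with the dimension of the variety under iteration. That is precisely the genuine gap: the ``sharp tube-incidence estimate near algebraic varieties of every codimension'' you invoke at the end is not a known lemma you can cite but the missing ingredient itself, so the argument cannot be closed. Your sketch is an accurate description of the state of the art, but as a proof proposal it fails at exactly the step you flag, and for $d\geq 3$ no repair is currently known. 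If your goal was to engage with this paper, the productive target is not the conjecture itself but Lemma~\ref{0224.lem22} and the iteration that derives the decoupling inequality from Proposition~\ref{0224.prop21}, which is where the paper's actual arguments live.
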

The restriction conjecture is introduced by Elias Stein. The conjecture is solved when $n=2$ by Fefferman \cite{MR257819}, but it is still open for higher dimensions. For the case $v_1=\ldots=v_{d-1}$, we refer to \cite{wang2024restrictionestimatesusingdecoupling} for the best known result. For the general case 
$\vec{v} \in \{-1,1\}^{d-1}$, we refer to \cite{MR4405679} and references therein.
\medskip

The purpose of this note is to relate a restriction conjecture and a decoupling theorem. A decoupling inequality is first introduced by \cite{MR1800068}. The $l^2$ decoupling inequality for the paraboloid is proved by \cite{MR3374964} and the $l^p$ decoupling inequality for the hyperbolic paraboloid is proved by \cite{MR3736493}. Decoupling inequalities have a number of applications. We refer to \cite{MR4680276} for references. 
\medskip

Let us introduce some notations to state a decoupling inequality. Denote by $\N_{\mathcal{H}_{\vec{v}}^{d-1}}(R^{-1})$ the $R^{-1}$-neighborhood of the hyperbolic paraboloid $\mathcal{H}_{\vec{v}}^{d-1}$.
Given a parameter $\vec{\alpha}=(\alpha_1,\ldots,\alpha_{d-1}) \in [0,1]^{d-1}$, let
$\Gamma_{\vec{\alpha} }(R^{-1})$ be the collection defined by
\begin{equation*}
\begin{split}
    \Big\{ (B \times \R) &\cap \N_{\mathcal{H}_{\vec{v}}^{d-1}}(R^{-1}):  
    \\& B= \big( (c_1,\ldots,c_{d-1})+ [0,R^{-\alpha_1}] \times \cdots \times [0,R^{-\alpha_{n-1}}] \big), \, c_i \in R^{-\alpha_i}\mathbb{Z} \Big\}.
\end{split}
\end{equation*}
For each $B \subset \R^d$ and each $F:\R^d \rightarrow \mathbb{C}$ we define
\begin{equation}
    \mathcal{P}_{B}F(x):= \int_B \widehat{F}(\xi)e(\xi \cdot x)\, d\xi.
\end{equation}
Let us state the $l^p$ decoupling theorem for the hyperbolic paraboloid $\mathcal{H}_{\vec{v}}^{d-1}$ in $\R^d$.
\begin{theorem}[\cite{MR3736493}]\label{0224.thm12} Let $\vec{\alpha}=(\frac12,\ldots,\frac12)$.
    For $2 \leq p \leq 2+\frac{4}{d-1}$ and $\epsilon>0$,
    \begin{equation}
         \|F\|_{L^p(\R^d)} \leq C_{p,\epsilon} R^{\frac12(d-1)(\frac12-\frac1p)+\epsilon} \big( \sum_{\theta \in {\Gamma}_{\vec{\alpha}}(R^{-1}) }\|F_{\theta}\|_{L^p}^p \big)^{\frac1p}
    \end{equation}
         for all functions $F: \R^d \rightarrow \mathbb{C}$ whose Fourier transforms are supported on $\N_{\mathcal{H}_{\vec{\alpha}}^{d-1}}(R^{-1})$.
\end{theorem}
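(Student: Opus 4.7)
The plan is to derive Theorem \ref{0224.thm12} from the restriction conjecture in dimension $d_{0} := (d+1)/2$ via a dimension-halving argument. The key numerology is that the decoupling critical exponent $p_{c} := 2+\tfrac{4}{d-1}$ in $\R^{d}$ coincides with the restriction critical exponent $2+\tfrac{2}{d_{0}-1}$ in $\R^{d_{0}}$ exactly when $d = 2d_{0}-1$; for $d=3$ this gives an unconditional proof via Fefferman's theorem in $\R^{2}$, which is the case of primary interest.

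First, by Fourier inversion in the normal direction and a standard dyadic decomposition of the $R^{-1}$-neighborhood (losing only $R^{\epsilon}$), I reduce the theorem to the analogous inequality for the extension operator $E$ associated with $\mathcal{H}^{d-1}_{\vec{v}}$. Next, I exploit the product structure of the (hyperbolic) paraboloid: split the base variables as $(\xi,\zeta) \in [0,1]^{d_{0}-1} \times [0,1]^{d_{0}-1}$ and the spatial variables as $(x,y,t) \in \R^{d_{0}-1} \times \R^{d_{0}-1} \times \R$. The phase factors, and one may write
\[
Eg(x,y,t) \;=\; \int_{[0,1]^{d_{0}-1}} \phi_{\zeta}(x,t)\, e\!\left( y\cdot\zeta + t\,\textstyle\sum_{j} w_{j}\zeta_{j}^{2} \right) d\zeta,
\]
where $\phi_{\zeta} := \widetilde{E}[g(\cdot,\zeta)]$ is the $\R^{d_{0}}$-extension of $g(\cdot,\zeta)$ along the paraboloid with sign pattern $(v_{1},\ldots,v_{d_{0}-1})$.

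Applying Hausdorff--Young in the $y$-variable (valid since $p_{c} \geq 2$) and then Minkowski's integral inequality (valid since $p_{c} \geq p_{c}'$, which lets me swap $\|\cdot\|_{L^{p_{c}}_{x,t}L^{p_{c}'}_{\zeta}}$ into $\|\cdot\|_{L^{p_{c}'}_{\zeta}L^{p_{c}}_{x,t}}$) yields
\[
\|Eg\|_{L^{p_{c}}(\R^{d})} \;\leq\; \left( \int \|\phi_{\zeta}\|_{L^{p_{c}}(\R^{d_{0}})}^{\, p_{c}'}\, d\zeta \right)^{1/p_{c}'}.
\]
Then Conjecture \ref{0224.thm11} applied to each $\phi_{\zeta}$ (at $p_{c}+\delta$ and absorbing the loss in $R^{\epsilon}$) gives the mixed-norm bound $\|Eg\|_{L^{p_{c}}(\R^{d})} \lesssim \|g\|_{L^{p_{c}'}_{\zeta} L^{p_{c}}_{\xi}}$.

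The final, and most delicate, step is to convert this mixed-norm bound into the $\ell^{p}$ decoupled sum appearing in the theorem. I apply the estimate with $g = \sum_{\theta} g_{\theta}$ and exploit that the caps $\theta \in \Gamma_{(1/2,\ldots,1/2)}(R^{-1})$ are Cartesian products $\theta_{\xi}\times\theta_{\zeta}$, so that the $g_{\theta}$'s are $\xi$-disjoint at fixed $\zeta$ and the inner $L^{p_{c}}_{\xi}$ collapses to a clean sum over $\theta$. The factor $R^{\frac{1}{2}(d-1)(\frac{1}{2}-\frac{1}{p})}$ emerges from H\"older's inequality over the $R^{(d-1)/2}$ caps (equivalently, from parabolic rescaling), and interpolation with the trivial $L^{2}$ estimate extends the bound to the full range $2 \leq p \leq p_{c}$. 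The main obstacle will be this last passage: cleanly matching the $L^{p_{c}'}_{\zeta}L^{p_{c}}_{\xi}$ structure against the square-shaped caps $\theta$ and tracking the correct power of $R$. A secondary technicality is verifying compatibility of the factoring with the sign pattern $\vec{v}$ in the hyperbolic case, which is automatic since each coordinate $\pm 1$ is preserved under the splitting.
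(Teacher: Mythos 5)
Your setup and numerology are consistent with the paper (the identification $d=2d_{0}-1$, and the coincidence of the critical exponents), and the sliced estimate you derive is correct as far as it goes: Hausdorff--Young in $y$, Minkowski, and the $\R^{d_{0}}$ restriction estimate applied to each $\phi_{\zeta}$ do give a mixed-norm bound of the shape $\|Eg\|_{L^{p_{c}}}\lesssim \|g\|_{L^{p_{c}'}_{\zeta}L^{p_{c}}_{\xi}}$. The genuine gap is the step you yourself flag as "delicate": there is no way to convert this bound, cap by cap, into the theorem's right-hand side $(\sum_{\theta}\|F_{\theta}\|_{L^{p}}^{p})^{1/p}$. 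The theorem's right-hand side consists of \emph{extension/projection} norms, while your bound has a \emph{density} norm, and for $p>2$ the density norm of a single cap is not dominated by any power of $R$ times the extension norm: concentrate $g_{\theta}$ on a set of measure tending to $0$ inside $\theta$ while keeping $\|g_{\theta}\|_{L^{p}(d\xi)}$ fixed; then $\|Eg_{\theta}\|_{L^{p}(w_{B_{R}})}\to 0$. So the observation that "the inner $L^{p_{c}}_{\xi}$ collapses to a clean sum over $\theta$" is an identity for density norms only and never connects to $\|F_{\theta}\|_{L^{p}}$. Moreover, even after the standard repair (reduce to $g$ locally constant at scale $R^{-1}$, i.e.\ pass to the small-cap/projection reformulation of restriction, which is exactly Proposition \ref{0224.prop21} quoted from \cite{guth2024smallcapdecouplingparaboloid}), your budget is already spent: converting the $\ell^{p'}$ structure in the $\zeta$-variables (which is what Hausdorff--Young/Minkowski, i.e.\ flat decoupling, produces) into an $\ell^{p}$ sum over the $R^{(d_{0}-1)/2}$ many $\zeta$-strips costs, by H\"older, exactly $R^{(d_{0}-1)(\frac12-\frac1p)}=R^{\frac12(d-1)(\frac12-\frac1p)}$, the entire allowed decoupling constant. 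What would then remain --- upgrading the $R^{-1}$-fine decomposition in the $\xi$-block to $R^{-1/2}$-caps with only an $R^{\epsilon}$ loss --- is, after parabolic rescaling of a cap, itself a decoupling statement at scale $R^{1/2}$ of essentially the same depth as the theorem being proved; it does not follow from disjointness, H\"older over caps, or "interpolation with the trivial $L^{2}$ estimate" (decoupling constants do not interpolate in that naive way in any case).

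This is precisely the point where the paper's argument diverges from yours: it never uses the restriction hypothesis in a single sliced application. Instead it (i) replaces restriction by its equivalent small-cap decoupling form with projection norms (Proposition \ref{0224.prop21}), and (ii) runs an iteration (Lemma \ref{0224.lem22}) in which the two blocks of $d_{0}-1$ frequency variables are refined alternately, each step passing from caps of widths $(R^{-a_{1}},R^{-a_{2}})$ to $(R^{-a_{1}},R^{-(2a_{1}-a_{2})})$ by parabolic rescaling of the current cap and an application of the small-cap decoupling at the rescaled thickness $\widetilde{R}^{-1}=16R^{-2(a_{1}-a_{2})}$, at cost $R^{2(a_{1}-a_{2})(d_{0}-1)(\frac12-\frac1p)+\epsilon'}$. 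Taking increments $a_{1}-a_{2}=\epsilon$ and iterating $\sim\epsilon^{-1}/2$ times, the costs multiply to exactly the allowed constant, and crucially the scheme only ever \emph{refines} caps, so the recombination problem your one-shot argument runs into never arises. To salvage your approach you would have to supply either a lossless passage from density norms to cap extension norms (false, as above) or a lossless fine-to-coarse recombination in the $\xi$-block (circular); the iterative rescaling is the missing idea.
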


Our theorem is as follows.

\begin{theorem}\label{0224.thm13}

Let $d \geq 2$.
    Conjecture \ref{0224.thm11} for the (hyperbolic) paraboloid in $\R^d$ implies Theorem \ref{0224.thm12} for the (hyperbolic) paraboloid in $\R^{2d-1}$.
\end{theorem}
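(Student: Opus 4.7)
The strategy exploits the product structure of the surface $\mathcal{H}^{2d-2}_{\vec v}$ in $\mathbb{R}^{2d-1}$: its height function factors as $\phi(\xi)+\phi(\eta)$, where $\phi(\xi)=\sum_i v_i \xi_i^2$ is the height function of $\mathcal{H}^{d-1}_{\vec v}$ in $\mathbb{R}^d$. The critical exponent $p_c = 2+\tfrac{2}{d-1}$ for the decoupling problem in $\mathbb{R}^{2d-1}$ coincides precisely with the restriction exponent for the lower-dimensional paraboloid; this is the key coincidence that makes the hypothesized estimate usable.

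\textbf{Step 1: Reductions.} By interpolating with the trivial $L^2$-decoupling (from Plancherel and orthogonality of pieces with disjoint Fourier support) it suffices to establish the inequality at the critical exponent $p=p_c$. A standard reduction absorbs the $R^{-1}$-thickness with an $R^{\epsilon}$ loss and allows us to replace $F$ by the extension operator $Eg$ for $\mathcal{H}^{2d-2}_{\vec v}$ applied to some $g$ on $[0,1]^{2d-2}$.

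\textbf{Step 2: Factorization.} The product structure gives
\[
Eg(x,y,t) = \int_{[0,1]^{d-1}} E_d[g(\xi,\cdot)](y,t)\cdot e(x\cdot\xi+t\phi(\xi))\,d\xi,
\]
where $E_d$ is the extension operator for $\mathcal{H}^{d-1}_{\vec v}$ in $\mathbb{R}^d$. For fixed $(y,t)$ this is a $(d{-}1)$-dimensional Fourier transform in $x\leftrightarrow\xi$. Applying Hausdorff--Young in $x$, then Minkowski to pull the $L^{p'}_\xi$ norm inside $L^p_{y,t}$ (valid since $p\ge p'$), and finally Conjecture \ref{0224.thm11} applied to $E_d[g(\xi,\cdot)]$ for each fixed $\xi$ (at an exponent just above $p_c$) produces the tensor extension bound
\[
\|Eg\|_{L^p(\mathbb{R}^{2d-1})}\lesssim \|g\|_{L^{p'}_\xi L^p_\eta([0,1]^{2d-2})}.
\]

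\textbf{Step 3: Decomposition into caps.} Decompose $g=\sum_\theta g_\theta$ over $R^{-1/2}$-caps $\theta=\theta_1\times\theta_2$. Since $p'/p<1$, the super-subadditivity $(\sum a_i)^{p'/p}\le\sum a_i^{p'/p}$ applied inside the $\xi$-integral gives
\[
\|g\|_{L^{p'}_\xi L^p_\eta}^{p'}\le\sum_\theta \|g_\theta\|_{L^{p'}_\xi L^p_\eta}^{p'}.
\]
H\"older in the $\xi$-variable, whose support has measure $R^{-(d-1)/2}$, yields $\|g_\theta\|_{L^{p'}_\xi L^p_\eta}\lesssim R^{-(d-1)/(2d)}\|g_\theta\|_{L^p}$. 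A further H\"older converts the outer $l^{p'}$-sum to $l^p$ over the $\sim R^{d-1}$ caps at cost $R^{(d-1)/d}$, producing the sharp factor $R^{(d-1)/(2d)+\epsilon}$ predicted by Theorem \ref{0224.thm12} in $\mathbb{R}^{2d-1}$.

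\textbf{Main obstacle.} The argument so far places $\|g_\theta\|_{L^p}$, not $\|Eg_\theta\|_{L^p}$, on the right-hand side; naively these differ by a factor of $R^{(d-1)/(2d)}$ which would halve the exponent on $R$ in the final constant. The resolution is to perform the tensor argument \emph{locally} (on each cap after parabolic rescaling to the unit scale) and then to use the hypothesized restriction at unit scale---where $\|E_d\tilde g_\theta\|_{L^p}$ and $\|\tilde g_\theta\|_{L^p}$ are comparable up to bounded constants---to reinsert $\|Eg_\theta\|_{L^p}$ with the correct $R$-power. Equivalently, combining the symmetric version (with $\xi$ and $\eta$ swapped) of the tensor bound with an induction on scales allows one to iterate the estimate through successive doublings, yielding the sharp constant $R^{(d-1)/(2d)+\epsilon}$ at the final scale $R$.
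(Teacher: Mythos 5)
Your Steps 1--3 do not prove the decoupling inequality of Theorem \ref{0224.thm12}; they prove a genuinely different statement in which the right-hand side carries the density norms $\|g_\theta\|_{L^p(d\xi)}$ instead of the norms $\|F_\theta\|_{L^p(\R^{2d-1})}$ (equivalently $\|Eg_\theta\|_{L^p}$) of the Fourier projections. You flag this yourself as the ``main obstacle,'' but the proposed resolution rests on a false comparability: restriction gives only the one-sided bound $\|E_d\tilde g\|_{L^p}\lesssim\|\tilde g\|_{L^p}$, and the reverse inequality $\|\tilde g\|_{L^p}\lesssim\|E_d\tilde g\|_{L^p}$ fails for $p>2$, even for global norms and even at unit scale. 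For instance, if $\tilde g=\sum_{j=1}^N\pm\psi_j$ is a sum of $N$ disjoint bumps with random signs, then $\|\tilde g\|_{L^p}\sim 1$ while (by Khinchine and a square-function computation, or by the Hausdorff--Young computation on each slab $x_d=\mathrm{const}$) $\|E_d\tilde g\|_{L^p}\sim N^{1/p-1/2}\to 0$; modulated examples likewise kill any local version. So the passage from your estimate $\|Eg\|_{L^p}\lesssim R^{(d-1)/(2d)+\epsilon}\big(\sum_\theta\|g_\theta\|_{L^p}^p\big)^{1/p}$ back to the decoupling theorem is not a bookkeeping issue but a loss of the essential information, and ``perform the argument locally after rescaling and induct on scales'' is not specified in any form that could recover it. (Secondary, fixable issues: restriction is only hypothesized for $p$ strictly above the critical exponent, so your Step 2 needs a local/$R^\epsilon$-loss version at $p_c$; and ``interpolation with trivial $L^2$ decoupling'' is not a literal interpolation of the decoupling constants.)

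The paper's proof is built precisely to avoid this trap: instead of applying the restriction hypothesis as an extension bound (which forces $\|g_\theta\|_{L^p}$ onto the right-hand side), it first converts it, via Proposition \ref{0224.prop21}, into a small-cap decoupling statement whose right-hand side consists of $L^p$ norms of Fourier projections $\mathcal P_\gamma F$ onto $R^{-1}$-caps. That form can be iterated: Lemma \ref{0224.lem22} applies it, after an affine change of variables and parabolic rescaling in one block of $d-1$ frequency variables, to upgrade a decoupling at cap scales $(a_1,\dots,a_1,a_2,\dots,a_2)$ to one at scales $(a_1,\dots,a_1,2a_1-a_2,\dots,2a_1-a_2)$, and alternating the two blocks $O(\epsilon^{-1})$ times reaches the $R^{-1/2}$-caps with the sharp constant $R^{(d-1)(\frac12-\frac1p)+O(\epsilon)}$. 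Your instinct that the two blocks of variables should be played off against each other and that some induction on scales is needed points in the right direction, but without a formulation of the hypothesis whose right-hand side already involves Fourier projections (as in Proposition \ref{0224.prop21}), the iteration you gesture at cannot close.
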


Note that the critical exponents of $p$ of the restriction conjecture for the hyperbolic paraboloid in $\R^d$ and the decoupling theorem for the hyperbolic paraboloid in $\R^{2d-1}$ are equal.
As mentioned before, Conjecture \ref{0224.thm11} is proved by Fefferman for the case $d=2$. Hence, our theorem gives an alternative proof of the $l^p$ decoupling theorem for the (hyperbolic) paraboloid in $\R^3$.

\subsection{Acknowledgements}
The author was supported by the NSF grant DMS-1800274 and POSCO Science Fellowship of POSCO TJ Park Foundation. The author would like to thank Shaoming Guo for valuable discussions. The author would like to thank a referee for valuable suggestions on writing.

\section{Proof of Theorem \ref{0224.thm13}}

Let us begin with the equivalent formulation of the restriction conjecture.

\begin{proposition}[Proposition 1.5 of \cite{guth2024smallcapdecouplingparaboloid}]\label{0224.prop21}
Let $d \geq 2$ and $\vec{\alpha}=(1,\ldots,1)$.
    If Conjecture \ref{0224.thm11} is true for some $d$, then for $2 \leq p \leq 2+\frac{2}{d-1}$ and $\epsilon>0$, we have
    \begin{equation}
    \|F\|_{L^p(\R^d)} \leq C_{p,\epsilon} R^{(d-1)(\frac12-\frac1p)+\epsilon} \Big( \sum_{\gamma \in \Gamma_{\vec{\alpha}}(R^{-1}) }\| \mathcal{P}_{\gamma}F \|_{L^p(\R^d)}^p \Big)^{\frac1p}
\end{equation}
     for all functions $F: \R^d \rightarrow \mathbb{C}$ whose Fourier transforms are supported on $\N_{\mathcal{H}_{\vec{\alpha}}^{d-1}}(R^{-1})$.
\end{proposition}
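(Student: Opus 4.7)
The plan is to derive Proposition \ref{0224.prop21} from Conjecture \ref{0224.thm11} in two stages. First I would prove the endpoint $p_0 = p_c := 2 + \tfrac{2}{d-1}$ (up to an $R^\epsilon$ loss) using a wave-packet decomposition at scale $R$ combined with the restriction conjecture, and then I would interpolate the resulting endpoint bound against the trivial Plancherel identity $\|F\|_2 = (\sum_\gamma \|\mathcal{P}_\gamma F\|_2^2)^{1/2}$ to cover $p \in [2, p_c]$. Since $\vec\alpha = (1,\dots,1)$, each cap $\gamma$ is essentially an $R^{-1}$-cube in frequency, so wave packets are ordinary modulated bumps and no parabolic rescaling takes place inside a single cap; the curvature of $\mathcal{H}_{\vec v}^{d-1}$ enters only through the application of the restriction conjecture.

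For the endpoint step, I would fix $p_0 > p_c$, tile $\R^d$ by $R$-cubes $Q$, and write $F_\gamma := \mathcal{P}_\gamma F = e(x \cdot \eta_\gamma)\, H_\gamma(x)$, where $\eta_\gamma = (\xi_\gamma, \sum_i v_i (\xi_\gamma)_i^2)$ is the center of $\gamma$ on $\mathcal{H}_{\vec v}^{d-1}$ and $H_\gamma$ is Fourier-supported in an $R^{-1}$-cube centered at the origin. Since $H_\gamma$ is locally constant at scale $R$, on each $Q$ one has $F_\gamma(x) \approx a_{\gamma, Q}\, e(x \cdot \eta_\gamma)$ with $a_{\gamma, Q} := H_\gamma(c_Q)$, which yields the Plancherel--Polya identity $\|F_\gamma\|_{L^{p_0}}^{p_0} \approx R^d \sum_Q |a_{\gamma, Q}|^{p_0}$ and the trigonometric-polynomial approximation $F(x)|_Q \approx \sum_\gamma a_{\gamma, Q}\, e(x \cdot \eta_\gamma)$. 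Conjecture \ref{0224.thm11} at $p_0$, applied to a Schwartz-smoothed version of $\sum_\gamma a_{\gamma, Q}\, \delta_{\xi_\gamma}$ on $[0,1]^{d-1}$ and compared with this trigonometric polynomial on $B_R$ (the bump factors are essentially constant there by Taylor expansion of the phase), produces the discrete restriction estimate
\begin{equation*}
\Big\|\sum_\gamma a_{\gamma, Q}\, e(x \cdot \eta_\gamma)\Big\|_{L^{p_0}(Q)}^{p_0} \le C_{p_0}\, R^{(d-1)(p_0 - 1)} \sum_\gamma |a_{\gamma, Q}|^{p_0}.
\end{equation*}
Summing over $Q$ and using the wave-packet identity then gives
\begin{equation*}
\|F\|_{L^{p_0}(\R^d)}^{p_0} \le C_{p_0}\, R^{(d-1)(p_0 - 1) - d} \sum_\gamma \|F_\gamma\|_{L^{p_0}}^{p_0},
\end{equation*}
whose prefactor exponent $[(d-1)(p_0-1) - d]/p_0$ equals the target $(d-1)(1/2 - 1/p_0)$ precisely at $p_0 = p_c$.

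To descend from this endpoint to $p \in [2, p_c]$, I would complex-interpolate the $p_0$-endpoint above (taken at $p_0 = p_c + \delta$) against Plancherel at $p = 2$ by applying complex interpolation to the summation operator $(F_\gamma)_\gamma \mapsto \sum_\gamma F_\gamma$ on the closed subspace $\ell^q(L^q_\gamma) \subset \ell^q(L^q)$ of Fourier-localized families. These subspaces interpolate in the standard way because smoothed Fourier projectors onto $\gamma$ are uniformly $L^q$-bounded for $1 < q < \infty$. Writing $1/p = \theta/2 + (1-\theta)/p_0$ and $A := (d-1) - (2d-1)/p_0$, a direct calculation gives $A(1-\theta) = (d-1)(1/2 - 1/p)$ precisely at $p_0 = p_c$, so taking $\delta = \delta(\epsilon)$ sufficiently small absorbs the resulting $O(\delta)$ error into $R^\epsilon$ uniformly for $p \in [2, p_c]$.

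The main obstacle I anticipate is making the wave-packet picture honest with genuine Schwartz tails: one needs the $L^p(Q)$-norm of $F$ to be comparable to that of the trigonometric polynomial $\sum_\gamma a_{\gamma, Q}\, e(x \cdot \eta_\gamma)$ (up to weighted tails), the $\ell^p$-sum of these local norms to be comparable to $\|F\|_{L^p}^p$, and the approximation of the smoothed extension by the trigonometric polynomial on $B_R$ to hold up to a bounded multiplicative constant. These steps are standard given the Fourier-support hypothesis on $F$ and the rapid decay of Schwartz bumps, but they require careful bookkeeping to avoid polynomial losses in $R$.
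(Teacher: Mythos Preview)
The paper does not supply its own proof of this proposition: it is quoted verbatim as Proposition~1.5 of \cite{guth2024smallcapdecouplingparaboloid} and used as a black box in the proof of Lemma~\ref{0224.lem22}. So there is no in-paper argument to compare against.

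Your outline is the standard route and is essentially correct. The chain ``extension estimate at $p_0>p_c$ $\Rightarrow$ discrete restriction on $R$-balls $\Rightarrow$ $\ell^{p_0}$ small-cap decoupling with exponent $(d-1)-(2d-1)/p_0$, then interpolate against Plancherel at $p=2$'' is precisely how the equivalence between the restriction conjecture and the sharp $\ell^p$ small-cap decoupling at the finest scale is usually proved, and your arithmetic check that the interpolated exponent collapses to $(d-1)(\tfrac12-\tfrac1p)$ exactly when $p_0=p_c$, with an $O(\delta)$ defect absorbed into $R^\epsilon$ for $p_0=p_c+\delta$, is right. The one place to be careful is the passage from ``$H_\gamma$ locally constant on $Q$'' to the two-sided Plancherel--P\'olya comparison and the control of $\|F\|_{L^{p_0}(Q)}$ by the exponential sum: the pointwise error $|H_\gamma(x)-H_\gamma(c_Q)|$ is not small, only comparable, so one should run the argument with a smooth weight $w_Q$ (Fourier-supported in $B(0,R^{-1})$) and the inequality $|F_\gamma|^{p_0}\lesssim |F_\gamma|^{p_0}\ast \varphi_R$ rather than a pointwise Taylor approximation. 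With that standard adjustment the Schwartz-tail bookkeeping goes through with only $O(1)$ losses, as you anticipate.
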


The proof of Theorem \ref{0224.thm13} relies on an iterative argument. This argument is inspired by \cite{MR3749372}. Consider the hyperbolic paraboloid $\mathcal{H}_{\vec{v}}^{2d-2}$ in $\R^{2d-1}$ associated with $\vec{v}=(v_1,\ldots,v_{2d-2}) \in \{-1,1\}^{2d-2}$. 
Our goal is to prove that for $ 2 \leq p \leq 2+ \frac{4}{2d-2}$, $\vec{\alpha}=(\frac12,\ldots,\frac12)$, and $\epsilon>0$,
\begin{equation}
         \|F\|_{L^p(\R^{2d-1})} \leq C_{p,\epsilon} R^{\frac12(2d-2)(\frac12-\frac1p)+\epsilon} \big( \sum_{\theta \in {\Gamma}_{\vec{\alpha}}(R^{-1}) }\|F_{\theta}\|_{L^p(\R^{2d-1})}^p \big)^{\frac1p}
    \end{equation}
         for all functions $F: \R^{2d-1} \rightarrow \mathbb{C}$ whose Fourier supports are in $\N_{\mathcal{H}_{\vec{\alpha}}^{2d-2}}(R^{-1})$. 
          For the rest of the proof, assume that $2 \leq p \leq 2+\frac{4}{2d-2}$.
\begin{lemma}\label{0224.lem22}
Suppose that $0 \leq a_2 \leq a_1 \leq \frac12$.
Define two vectors
\begin{equation}
    \begin{split}
        &\vec{\alpha}:=(a_1,\ldots,a_1,a_2,\ldots,a_2) \in [0,1]^{d-1} \times [0,1]^{d-1}
        \\&\vec{\beta}:=(a_1,\ldots,a_1,2a_1-a_2,\ldots,2a_1-a_2) \in [0,1]^{d-1} \times [0,1]^{d-1}.
    \end{split}
\end{equation} 
Under the assumption of Conjecture \ref{0224.thm11} for the hyperbolic paraboloid in $\R^d$ for some $d$, for  given $\tau \in \Gamma_{ \vec{\alpha} }(R^{-1})$ and $\epsilon'>0$, we have
\begin{equation}\label{0224.lem225}
         \|F\|_{L^p(\R^{2d-1})} \leq C_{p,\epsilon'} R^{{(2a_1-2a_2)(d-1)}(\frac12-\frac1p)+\epsilon'} \big( \sum_{\theta \in {\Gamma}_{\vec{\beta}}(R^{-1}) }\|\mathcal{P}_{\theta}F\|_{L^p(\R^{2d-1})}^p \big)^{\frac1p}
    \end{equation}
    for all functions $F$ whose Fourier supports are contained in $\tau$. The constant $C_{p,\epsilon'}$ is independent of the choice of $\tau, a_1, a_2$.
\end{lemma}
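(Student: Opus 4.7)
The plan is to rescale $\tau$ to a standard piece via parabolic rescaling, then slice in the physical variable dual to $\xi$ so that the problem reduces to a $d$-dimensional small-cap decoupling on the paraboloid in $\R^d$, which is exactly what Proposition~\ref{0224.prop21} supplies. Write Fourier coordinates on $\R^{2d-1}$ as $(\xi,\zeta,\eta)\in\R^{d-1}\times\R^{d-1}\times\R$ and dual physical coordinates as $(x,y,t)$. Suppose $\tau$ has center $(c,c',h_0)$ with $c\in R^{-a_1}\mathbb{Z}^{d-1}$ and $c'\in R^{-a_2}\mathbb{Z}^{d-1}$. I apply on the Fourier side the substitution $\xi=c+R^{-a_1}\xi'$, $\zeta=c'+R^{-a_2}\zeta'$, $\eta=L(\xi',\zeta')+R^{-2a_2}\eta'$, where $L$ is the affine function absorbing the constant and linear parts of the defining equation of $\mathcal H^{2d-2}_{\vec v}$. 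This corresponds on the physical side to $(x,y,t)=(R^{a_1}x',R^{a_2}y',R^{2a_2}t')$ together with a modulation. Denote by $G$ the rescaled, modulated version of $F$; the Jacobians appear identically on both sides of the target inequality, so it suffices to prove the analogue for $G$. A direct computation shows that $\widehat G$ is supported in
\[\tau'=\{\xi'\in[0,1]^{d-1},\ \zeta'\in[0,1]^{d-1},\ |\eta'-R^{-2(a_1-a_2)}P(\xi')-Q(\zeta')|\le R^{-(1-2a_2)}\},\]
with $P(\xi')=\sum_{i=1}^{d-1}v_i(\xi_i')^2$ and $Q(\zeta')=\sum_{i=1}^{d-1}v_{d-1+i}(\zeta_i')^2$.

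The key observation is that for each fixed $x'\in\R^{d-1}$, the slice $g_{x'}(y',t'):=G(x',y',t')$ has partial Fourier transform (in $(y',t')$) equal to $\int\widehat G(\xi',\zeta',\eta')e(x'\cdot\xi')\,d\xi'$, hence supported in the projection of $\tau'$ onto the $(\zeta',\eta')$ coordinates. Setting $\tilde R:=R^{2(a_1-a_2)}$, the term $R^{-2(a_1-a_2)}P(\xi')$ varies over an $O(\tilde R^{-1})$-interval as $\xi'$ ranges in $[0,1]^{d-1}$; since $a_1\le\tfrac12$ forces $R^{-(1-2a_2)}\le \tilde R^{-1}$, this projection lies in a $C\tilde R^{-1}$-neighborhood of a vertical translate of the standard hyperbolic paraboloid $\mathcal H^{d-1}_{(v_d,\ldots,v_{2d-2})}$ in $\R^d$. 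The range $2\le p\le 2+\tfrac{4}{2d-2}=2+\tfrac{2}{d-1}$ is precisely the range in which Proposition~\ref{0224.prop21} delivers small-cap decoupling at scale $\tilde R$ in dimension $d$, with loss $\tilde R^{(d-1)(1/2-1/p)+\epsilon'}=R^{(2a_1-2a_2)(d-1)(1/2-1/p)+\epsilon'}$. Applying the proposition to each $g_{x'}$, raising to the $p$-th power, and using Fubini to integrate in $x'$ gives
\[\|G\|_{L^p(\R^{2d-1})}^p\le C_{p,\epsilon'}^p\,R^{p(2a_1-2a_2)(d-1)(1/2-1/p)+p\epsilon'}\sum_\gamma\|\mathcal P_{\R^{d-1}\times\gamma}G\|_{L^p(\R^{2d-1})}^p,\]
where $\gamma$ runs over the $\tilde R^{-1}$-small caps on $\mathcal H^{d-1}_{(v_d,\ldots,v_{2d-2})}$.

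It remains to identify the resulting pieces with elements of $\Gamma_{\vec\beta}(R^{-1})$. A $\tilde R^{-1}$-cube in $\zeta'$ pulls back under $\zeta=c'+R^{-a_2}\zeta'$ to an $R^{-(2a_1-a_2)}$-cube in $\zeta$ whose center lies in $R^{-(2a_1-a_2)}\mathbb{Z}^{d-1}$, and the cylindrical slab $\R^{d-1}\times\gamma$ intersected with $\tau'$ corresponds under the inverse rescaling to a unique $\theta\in\Gamma_{\vec\beta}(R^{-1})$ contained in $\tau$; transferring the inequality back through the rescaling then yields \eqref{0224.lem225}. The main obstacle is the slicing step of the second paragraph: the hypothesis $a_1\le\tfrac12$ is precisely what forces the $\xi'$-variation of the height to dominate the $R^{-(1-2a_2)}$-thickness of $\tau'$, so that the projected Fourier support is a $\tilde R^{-1}$-neighborhood of a standard paraboloid---exactly the scale at which Proposition~\ref{0224.prop21} delivers caps that, after undoing the rescaling, match $\Gamma_{\vec\beta}(R^{-1})$ rather than a finer or coarser family.
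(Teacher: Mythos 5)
Your proposal is correct and follows essentially the same route as the paper: rescale $\tau$ parabolically, observe that since $a_1\le\tfrac12$ the quadratic contribution of the first $d-1$ variables and the slab thickness are both $O(\widetilde R^{-1})$ with $\widetilde R=R^{2(a_1-a_2)}$, and then apply Proposition~\ref{0224.prop21} in dimension $d$ at scale $\widetilde R$ cylindrically in the last $d$ frequency variables, which after undoing the rescaling yields exactly the caps of $\Gamma_{\vec\beta}(R^{-1})$. The only differences are cosmetic: you normalize the first block to $[0,1]^{d-1}$ (the paper rescales all variables by $R^{a_2}$, leaving a tiny box) and you write out the Fubini slicing that the paper leaves implicit.
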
 

Let us assume the lemma and continue the proof.  Fix $\epsilon>0$. By the assumption on the Fourier support of $F$, we may write
\begin{equation}
    F=\sum_{\theta_1 \in {\Gamma}_{\vec{\alpha_1}}(R^{-1}) }\mathcal{P}_{\theta_1}F
\end{equation}
where $\vec{\alpha_1}:=({\epsilon},\ldots,{\epsilon},0,\ldots,0) \in [0,1]^{d-1} \times [0,1]^{d-1}$.
Since the cardinality of the collection ${\Gamma}_{\vec{\alpha_1}}(R^{-1}) $ is comparable to $R^{{(d-1)\epsilon} }$, by the triangle inequality and H\"{o}lder's inequality, we have
\begin{equation}\label{25.03.07.26}
    \|F\|_{L^p(\R^{2d-1})} \leq C_p R^{(d-1)\epsilon (1-\frac1p) } \big( \sum_{\theta_1 \in {\Gamma}_{\vec{\alpha_1}}(R^{-1}) }\|\mathcal{P}_{\theta_1}F\|_{L^p(\R^{2d-1})}^p \big)^{\frac1p}.
\end{equation}
To simplify the notation, let us introduce  $C:=(d-1)(1-\frac1p)$.

Define
    \begin{equation}
        \vec{\alpha}_n:=
        \begin{cases}
            {\epsilon}( n-1  ,\ldots,n-1, n,\ldots, n )  \;\;\; \mathrm{if} \, n \,
            \mathrm{is \, even}
            \\
            {\epsilon}( n  ,\ldots,n, n-1,\ldots, n-1 ) \;\;\; \mathrm{if} \, n \,
            \mathrm{is \, odd}
        \end{cases}
    \end{equation}
By applying Lemma \ref{0224.lem22} to the right hand side of \eqref{25.03.07.26} with $\epsilon'=\epsilon^2$, $a_1=\epsilon$, and $a_2=0$, we have
\begin{equation*}
    \|F\|_{L^p(\R^{2d-1})} \leq C_{p,\epsilon^2}C_p R^{C\epsilon} R^{2\epsilon(d-1)(\frac12-\frac1p)+\epsilon^2}\big( \sum_{\theta_2 \in {\Gamma}_{\vec{\alpha_2}}(R^{-1}) }\|\mathcal{P}_{\theta_2}F\|_{L^p(\R^{2d-1})}^p \big)^{\frac1p}.
\end{equation*}
Note that we can apply Lemma \ref{0224.lem22} again to the right hand side of the above inequality with $\epsilon'=\epsilon^2$, $a_1=2\epsilon$, and $a_2=\epsilon$ (when we apply the lemma, we  switch $\xi_{d},\ldots,\xi_{2d-2}$ variables and $\xi_1,\ldots,\xi_{d-1}$ variables). By repeating this process $[\epsilon^{-1}/2]$-times, we have 
\begin{equation*}
\begin{split}
    \|F\|_{L^p(\R^{2d-1})} \leq (C_{p,\epsilon^2})^{[\epsilon^{-1}/2]}C_p R^{C\epsilon} &R^{2\epsilon [\epsilon^{-1}/2](d-1)(\frac12-\frac1p)+\frac{\epsilon}{2}} \\& \times \big( \sum_{\theta \in {\Gamma}_{\vec{\alpha}_{[\epsilon^{-1}/2]}}(R^{-1}) }\|\mathcal{P}_{\theta }F\|_{L^p(\R^{2d-1})}^p \big)^{\frac1p}.
\end{split}
\end{equation*}
By the triangle inequality, we finally obtain
\begin{equation}
    \|F\|_{L^p(\R^{2d-1})} \leq \widetilde{C}_{p,\epsilon} R^{2C\epsilon+\frac{\epsilon}{2}} (R)^{(d-1)(\frac12-\frac1p)}\big( \sum_{\theta \in {\Gamma}_{\vec{\alpha}}(R^{-1}) }\|\mathcal{P}_{\theta }F\|_{L^p(\R^{2d-1})}^p \big)^{\frac1p},
\end{equation}
where $\vec{\alpha}=(\frac12,\ldots,\frac12)$ and $\widetilde{C}_{p,\epsilon}=(C_{p,\epsilon^2})^{[\epsilon^{-1}/2]}C_p$.
This is the desired inequality. It remains to prove the lemma.

\begin{proof}[Proof of Lemma \ref{0224.lem22}]
Let us fix $\tau \in \Gamma_{\vec{\alpha}}(R^{-1})$ and suppose that the Fourier support of $F$ is contained in $\tau$. We need to prove \eqref{0224.lem225}. We write
\begin{equation}
    \tau=  \big(\big((c_1,\ldots,c_{2d-2})+ ([0,R^{-a_1}]^{n-1} \times [0,R^{-a_{2}}]^{n-1}) \big) \times \mathbb{R} \big) \cap \N_{\mathcal{H}_{\vec{v}}^{2d-2}}(R^{-1}).
\end{equation}
To simplify the notation, we introduce
\begin{equation}
    \begin{split}
        &c:=(c_1,\ldots,c_{2d-2}), \;\;  \;\; \xi=(\xi',\xi_{2d-1}) \in \R^{2d-2} \times \R,
        \\&
P(\xi'):=v_1\xi_1^2+\cdots+v_{2d-2}\xi_{2d-2}^2.
    \end{split}
\end{equation}
Define the affine transformation
\begin{equation}
    L(\xi):=(\xi'+c,\xi_{2d-1}+P(c) + \xi' \cdot \nabla P(c)), \;\;\;\;\; \xi \in\R^{2d-1}
\end{equation}
and the function $G_0$ satisfying
$    \hat{G}_0(\xi):=\hat{F}(L\xi)$. Then the Fourier support of $G_0$ is contained in the following set
    \begin{equation}
     \big( [0,R^{-a_1}]^{d-1} \times [0,R^{-a_2}]^{d-1} \times \R \big) \cap   \N_{\mathcal{H}_{\vec{v}}^{2d-2}}(16R^{-1}).
    \end{equation}
    Recall that $a_2 \leq a_1$. Define the linear mapping $L_1$ and the function $G$ so that
    \begin{equation}\label{25.03.10.213}
        L_1(\xi):=(R^{-a_2}\xi',R^{-2a_2}\xi_{2d-1}), \;\;\; \hat{G}(\xi):=\hat{G_0}(L_1(\xi))=\hat{F}(L_1L(\xi)).
    \end{equation}
    Then the Fourier support of $G$ is contained in
    \begin{equation}\label{02.24.set}
     \big( [0,R^{a_2-a_1}]^{d-1} \times [0,1]^{d-1} \times \R \big) \cap   \N_{\mathcal{H}_{\vec{v}}^{2d-2}}(16R^{2a_2-1}).
    \end{equation}
    Note that $R^{2a_2-1} \leq R^{-2a_1+2a_2}$.
    By performing the linear  change of variables \eqref{25.03.10.213},  the desired inequality \eqref{0224.lem225} follows from
\begin{equation*}
      \|G\|_{L^p(\R^{2d-1})} \leq C_{p,\epsilon'} {R^{(2a_1-2a_2)(d-1){(\frac12-\frac1p)+\epsilon'}}} \big( \sum_{\theta \in {\Gamma}_{\vec{\beta_0}}(\widetilde{R}^{-1}) }\|\mathcal{P}_{\theta}G\|_{L^p(\R^{2d-1})}^p \big)^{\frac1p}
\end{equation*}
where  $\widetilde{R}^{-1}:=16R^{-2a_1+2a_2}$ and $\vec{\beta_0}:=(
\frac12,\ldots,\frac12,1,\ldots,1) \in \R^{d-1} \times \R^{d-1}$.
To prove the above inequality, let us first  observe that the set \eqref{02.24.set} is contained in
\begin{equation}\label{25.03.10.215}
     [0,R^{a_2-a_1}]^{d-1} \times \mathcal{N}_{\mathcal{H}_{\vec{v_0}}^{d-1}}(d\widetilde{R}^{-1})
\end{equation}
where $\vec{v}_0=(v_d,\ldots,v_{2d-2})$. To prove this, suppose that $\xi=(\xi_1,\ldots,\xi_{2d-1})$ is an element of \eqref{02.24.set}. We need to verify that
\begin{equation}
    \Big|\xi_{2d-1}-\sum_{i=d}^{2d-2}v_i\xi_i^2\Big| \leq d\widetilde{R}^{-1}.
\end{equation}
This is true because 
\begin{equation}
\begin{split}
    &\Big|\sum_{i=1}^{2d-2}v_i\xi_i^2- \sum_{i=d}^{2d-2}v_i\xi_i^2\Big| \leq \sum_{i=1}^{d-1}|\xi_i|^2 \leq  (d-1)R^{-2a_1+2a_2},
    \\&
    \Big| \xi_{2d-1} - \sum_{i=1}^{2d-2}v_i\xi_i^2 \Big| \leq 10R^{2a_2-1} \leq  10R^{-2a_1+2a_2}.
\end{split}
\end{equation} 
Since the Fourier support of $G$ is contained in \eqref{25.03.10.215}, to get the desired inequality, it suffices to decompose frequencies for $\xi_d,\ldots,\xi_{2d-2}$-variables. We apply Proposition \ref{0224.prop21} for the last $d$-variables with $R^{-1}$ replaced by $d\widetilde{R}^{-1}$, and obtain 
\begin{equation*}
      \|G\|_{L^p(\R^{2d-1})} \leq C_{p,\epsilon'} {R^{(2a_1-2a_2)(d-1){(\frac12-\frac1p)+\epsilon'}}} \big( \sum_{\theta \in {\Gamma}_{\vec{\beta_0}}(d\widetilde{R}^{-1}) }\|\mathcal{P}_{\theta}G\|_{L^p(\R^{2d-1})}^p \big)^{\frac1p}.
\end{equation*}
It suffices to apply the triangle inequality and obtain
\begin{equation*}
      \|G\|_{L^p(\R^{2d-1})} \leq {C}_{p,\epsilon'}' {R^{(2a_1-2a_2)(d-1){(\frac12-\frac1p)+\epsilon'}}} \big( \sum_{\theta \in {\Gamma}_{\vec{\beta_0}}(\widetilde{R}^{-1}) }\|\mathcal{P}_{\theta}G\|_{L^p(\R^{2d-1})}^p \big)^{\frac1p}.
\end{equation*}
This completes the proof.
\end{proof}

\bibliographystyle{alpha}
\bibliography{reference}

\end{document}